\documentclass[a4paper, 12pt]{amsart}   

\usepackage{mathptmx, amssymb,amscd,latexsym, eulervm} 
\usepackage{amsmath}
\usepackage{amsthm}
\usepackage{mathdots}
\usepackage{setspace}
\usepackage{hyperref}
\hypersetup{
	colorlinks,
	linkcolor={blue!90!black},
	citecolor={red!80!black},
	urlcolor={blue!50!black},
	breaklinks=true
}
\usepackage{tabularx}
\usepackage{comment}
\usepackage{amsfonts}
\usepackage{paralist}
\usepackage{aliascnt}
\usepackage[initials, lite]{amsrefs}
\usepackage{amscd}
\usepackage{blkarray}
\usepackage{booktabs}
\usepackage{enumitem}
\usepackage{orcidlink}
\usepackage{setspace}
\usepackage[inner=2.5cm,outer=2.5cm, bottom=3.2cm]{geometry}
\usepackage{hyperref}
\usepackage[capitalize,nameinlink]{cleveref}
\usepackage{tikz, tikz-cd}
\usepackage{calligra,mathrsfs}
\usetikzlibrary{matrix}
\usetikzlibrary{arrows,calc}
\usepackage{url}
\usepackage{amssymb,latexsym,amsmath,amsfonts,amsthm,amscd,graphicx,url,color,hyperref,stmaryrd}

\setlength{\parskip}{.07 in}

\theoremstyle{plain}

\newtheorem{thm}{Theorem}

\newtheorem{cor}[thm]{Corollary}

\newtheorem{que}[thm]{Question}

\theoremstyle{definition}

\newtheorem{remark}[thm]{Remark}

\newtheorem{example}[thm]{Example}

\newtheorem{discussion}[thm]{Discussion}

\newcommand{\mm}{\mathfrak{m}}
\newcommand{\mc}{\mathfrak{c}}

\onehalfspacing

\bibliographystyle{amsplain}

\setcounter{MaxMatrixCols}{30}

\title[Upper bound for multiplicity and Wilf Conjecture]{An upper bound for the multiplicity and Wilf's conjecture for one-dimensional Cohen-Macaulay rings}

\author{Marco D'Anna \orcidlink{0000-0002-9943-5527}}

\address[Marco D'Anna]{Dipartimento di Matematica e Informatica, \ Universit\`a di Catania, \  Viale Andrea Doria 6, 
	95125 Catania,Italy}

\email{marco.danna@unict.it}

\author{Alessio Moscariello \orcidlink{0000-0001-8050-4281}}

\address[Alessio Moscariello]{Department of Mathematics and Scientific Computing, \ University of Graz, NAWI Graz, \ Heinrichstra{\ss}e 36, 8010 Graz, Austria}

\email{alessio.moscariello@uni-graz.at}

\subjclass[2020]{13H10, 13H15, 20M14}

\keywords{multiplicity; Cohen-Macaulay ring; conductor; reduction; almost Gorenstein ring; Wilf's conjecture.}

\begin{document}
	
	\begin{abstract}
		In this work we provide an upper bound for the multiplicity of a one-dimensional Cohen-Macaulay ring (under certain conditions), describe the rings attaining the equality for this bound, and outline a connection with Wilf's conjecture for numerical semigroup rings. 
		Then we prove the analogue of Wilf's conjecture for almost Gorenstein rings.
		
		
	\end{abstract}
	
	\maketitle
	
	\section*{Introduction}
	
	Let $(R,\mathfrak m)$ be a local Cohen-Macaulay ring.
	In a famous paper by S. S. Abyankhar (\cite{Ab}) it is proved that 
	the multiplicity of $R$
	(that is usually denoted by $e(R)$) is bounded below 
	in function of the embedding dimension (that we will denote by $\nu(R)$ or, simply, by $\nu$) and the Krull-dimension $d$ of $R$:
	$$
	e(R) \geq \nu+1-d
	$$
	This bound is sharp and rings for which the equality is attained are called rings of minimal multiplicity (or of maximal embedding dimension)
	and have particularly nice properties (see e.g. \cite{S}). On the other hand,
	upper bounds for the multiplicity involving the binomial coefficient $\binom{\nu}{d}$ are given in \cite{HW} and \cite{KZ}, under particular hypotheses -
	mainly in positive characteristic.
	
	In this paper we propose another upper bound for one-dimensional Cohen-Macaulay rings, under the assumption that the integral closure $\overline R$ of $R$ in its total ring of fractions is a finite $R$-module. This bound is  dependent on $\nu$ and on the conductor ideal $\mc = R:\overline R$; more precisely, in Theorem \ref{dimd} we prove that:
	$$e(R) \le (\nu-1)\ell(R/\mc) + 1.$$
	
	This bound has been inspired by a result we obtained in a previous paper
	(\cite{DM}) about numerical semigroups and, consequently, about monomial curves (or numerical semigroup rings).
	However, even working in dimension one, its generalization to Cohen-Macaulay local rings is not straightforward and requires 
	ring theory techniques, and, as noticed above, the assumption of appropriate hypotheses.
	
	In the context of numerical semigroups, our bound is also related to a long-standing conjecture, due to Wilf (cf. \cite{Wilf}), which has been studied for decades (see \cite{D} for the state of the art on this conjecture). 
	More precisely, Theorem \ref{dimd}, combined with Lech's inequality (\cite{L}[Theorem 3]), implies that the Hilbert-Samuel multiplicty of $\mc$
	can be bounded as follows: $e(\mc) \le \nu\cdot\ell(R/\mc)^2$ (see Corollary \ref{pseudowilfring}).
	On the other hand, Wilf's conjecture states that $e(\mc) \le \nu \cdot \ell(R/\mc)$  for numerical semigroup rings.
	Thus one could ask whether it is possible to factor out $\ell(R/\mc)$ from the inequality of the above mentioned corollary
	(see Question \ref{pseudoWilfConj}) even in this more general context. This idea is confirmed by the fact that the bound given in Corollary \ref{pseudowilfring} is sharp if and only if $\ell(R/\mc)=1$. 
	
	In Section 2, we restrict our attention to almost Gorenstein rings. In recent years several authors introduced new classes of Cohen-Macaulay rings, which are "close" to Gorenstein rings in certain respects. Among these classes, one of the most relevant if that of almost Gorenstein rings, introduced for the one-dimensional analytically unramified case in \cite{BF} and recently generalized in \cite{GM} and then in \cite{GTT}. This class of rings has a numerical analogue in the class of almost symmetric numerical semigroups (see again \cite{BF}). It has been proved in \cite{B} and in \cite{DM} that almost symmetric semigroups satisfy Wilf's conjecture, but both the proofs are based on numerical arguments. Here we give a proof that holds for one-dimensional rings satisfying the hypotheses of this paper (see Theorem \ref{AGWilf}). This proof can be extended to rings with canonical reduction (see \cite{R}), provided that there exists a minimal generator of the maximal ideal, different by the minimal reduction, that belongs to $\mm:\omega_R$.

	
	\section{Bound for the multiplicity}
	
	Let $(R,\mm)$ be a one-dimensional, Cohen-Macaulay local ring, with infinite residue field $R/\mm$. 
	Assume that $R$ is analytically unramified, i.e., by definition, that its $\mm$-adic completion $\hat R$ is a reduced ring. This last condition is equivalent to the fact that the integral closure $\overline R$ of $R$ in its total ring of fractions $Q(R)$ is a finite $R$-module
	(cf. \cite[Theorem 10.2]{M}).
	
	From our assumptions, it follows that $R$ is a reduced ring (since $\hat R$ is reduced), and that the conductor $\mc= R:\overline R$ is a non-zero ideal of $R$ containing a non-zero divisor;
	therefore $\mc$ is an $\mm$-primary ideal and the length 
	$\ell(R/\mc)$ is finite.
	
	Moreover, by the Cohen-Macaulay property, $\mm$ contains a non-zero divisor; this implies that any minimal reduction $(x)$ of $\mm$ must be generated by a non-zero divisor, which can be chosen as an element of a minimal set of generators of $\mm$. By the Cohen-Macaulayness of $R$ it also follows that its multiplicity $e(R)$ can be computed as the length $\ell(R/(x))$ (see \cite{HS}[Proposition 11.2.2]). From this fact we also immediately get that $e(R)=\ell(J/xJ)$, for any $\mm$-primary ideal $J$; in fact, $\ell(R/xJ)=\ell(R/(x))+\ell((x)/xJ)=\ell(R/J)+\ell(J/xJ)$ and, since $x$ is a non-zerodivisor, $\ell(R/J)=\ell((x)/xJ)$, and therefore $\ell(R/(x))=\ell(J/xJ)$.
	
	\begin{remark}\label{m}
		If $(x)$ is a minimal reduction of $\mm$, we have that $\mm=x\overline R \cap R$. In fact, denoting by $P_1, \dots , P_h$ the minimal primes of $R$, $R  \hookrightarrow R/P_1 \times \dots \times  R/P_h$, $Q(R) \cong Q(R/P_1) \times \dots \times Q(R/P_h)$ and $\overline R \cong \overline{(R/P_1)} \times \dots \times \overline{(R/P_h)}$, where $\overline{(R/P_i)}$ is the integral closure of $R/P_i$ in its quotient field $Q(R/P_i)$ (see \cite[Proposition 5.17]{G}). Now let $M$ be any maximal ideal of $\overline R$; then $\overline R_M$ is a DVR, hence $(x\overline R)_M=x\overline R_M$ is an integrally closed ideal, and, by \cite[Proposition 1.1.4]{HS}, it follows that $x\overline R$ is also integrally closed.
		Therefore, by \cite[Proposition 1.6.1]{HS}, the integral closure of $(x)$ as ideal of $R$ equals $x\overline R \cap R$, and, being $(x)$ a reduction of $\mm$, we get $\mm \subseteq \overline{(x)}=x\overline R \cap R$, that, in turn, implies the desired equality.
		
		
	\end{remark}
	
	\begin{thm}\label{dimd}
		Let $(R,\mm)$ be a one-dimensional, Cohen-Macaulay, analytically unramified, local ring, with embedding dimension $\nu$, 
		such that the residue field $R/\mm=\Bbbk$ 
		is infinite, and let $(x)$ be a minimal reduction of $\mm$. Then $$e(R) \le (\nu-1)\ell(R/\mc) + 1.$$
		Moreover, the equality occurs if and only if either $R$ is a DVR or $\ell(R/\mc)=1$.
	\end{thm}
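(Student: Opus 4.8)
The plan is to reduce the multiplicity to the colength $\ell(\mm/xR)$, filter this module by the remaining minimal generators, and bound each graded piece by $\ell(R/\mc)$ using that the conductor is an ideal of $\ovR$.

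First I would record that, since $x$ is a nonzerodivisor and $xR\subseteq\mm$, additivity of length gives $e(R)=\ell(R/xR)=\ell(R/\mm)+\ell(\mm/xR)=1+\ell(\mm/xR)$, so it suffices to prove $\ell(\mm/xR)\le(\nu-1)\ell(R/\mc)$. Writing a minimal generating set $\mm=(x,y_2,\dots,y_\nu)$ with $x$ chosen as in the hypotheses, I set $M_1=xR$ and $M_j=xR+Ry_2+\cdots+Ry_j$, obtaining a filtration $xR=M_1\subseteq\cdots\subseteq M_\nu=\mm$ whose graded pieces are cyclic, $M_j/M_{j-1}\cong R/(M_{j-1}:_R y_j)$, with each colon ideal proper (as $y_j\notin M_{j-1}$ by minimality of the generators), hence contained in $\mm$.

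The crucial step is the inclusion $\mc\mm\subseteq xR$. To get it I multiply the identity $\mm=x\ovR\cap R$ of Remark~\ref{m} by $x^{-1}$, obtaining $x^{-1}\mm=\ovR\cap x^{-1}R$; since $\mc$ is an ideal of $\ovR$ one has $\mc\ovR=\mc\subseteq R$, and multiplying the previous identity by $\mc$ yields $x^{-1}\mc\mm=\mc(\ovR\cap x^{-1}R)\subseteq\mc\ovR=\mc\subseteq R$, i.e. $\mc\mm\subseteq xR$. Consequently $\mc y_j\subseteq\mc\mm\subseteq xR\subseteq M_{j-1}$, so $\mc\subseteq(M_{j-1}:_R y_j)$ and $\ell(M_j/M_{j-1})\le\ell(R/\mc)$; summing the $\nu-1$ pieces proves the bound. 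This inclusion, resting on Remark~\ref{m} together with the fact that $\mc$ is an $\ovR$-ideal, is the one genuinely nontrivial ingredient; the rest is length bookkeeping.

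For the equality case I would exploit the trivial remark that $xy_j\in xR\subseteq M_{j-1}$ shows $x\in(M_{j-1}:_R y_j)$ for every $j$, so that $\mc\subseteq(M_{j-1}:_R y_j)\subseteq\mm$ and $x$ lies in each of these ideals. If $R$ is a DVR both sides equal $1$. Otherwise $R$ is not a DVR, which forces $\nu\ge 2$ and, since then $R\neq\ovR$, also $\mc\subseteq\mm$. I would then use the elementary equivalence $x\in\mc\iff\mc=\mm$: indeed $x\in\mc$ means $x\ovR\subseteq R$, whence $\mm=x\ovR\cap R=x\ovR$ is an ideal of $\ovR$ and so $\mm\subseteq\mc$. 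Granting this, if $\ell(R/\mc)=1$ then $\mc=\mm$ and the squeeze $\mc\subseteq(M_{j-1}:_R y_j)\subseteq\mm$ gives equality in every graded piece; conversely, equality forces every $(M_{j-1}:_R y_j)=\mc$, and since $x$ belongs to each such ideal we get $x\in\mc$, hence $\mc=\mm$ and $\ell(R/\mc)=1$. The observation $x\in(M_{j-1}:_R y_j)$ is exactly what makes the equality analysis fall out cleanly, so I expect no serious obstacle there once $\mc\mm\subseteq xR$ is in hand.
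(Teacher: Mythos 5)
Your proof is correct, but it organizes the length count differently from the paper. The shared engine is the same: Remark~\ref{m} gives $\mm\ovR=x\ovR$, and since $\mc$ is an ideal of $\ovR$ one gets $\mm\mc\subseteq x\mc\subseteq xR$ (the paper records this as the equality $\mm\mc=x\mc$). From there the paper identifies $e(R)=\ell(\mc/x\mc)=\ell(\mc/\mm\mc)$ with the minimal number of generators of $\mc$, writes $\ell(R/\mc)+e(R)=1+\ell(\mm/\mm\mc)$, and bounds $\ell(\mm/\mm\mc)$ by $\nu\,\ell(R/\mc)$ via the surjection $(R/\mc)^{\nu}\twoheadrightarrow\mm/\mm\mc$; you instead write $e(R)=1+\ell(\mm/xR)$ and filter $\mm/xR$ by the $\nu-1$ generators other than $x$, bounding each cyclic factor $R/(M_{j-1}:_R y_j)$ by $\ell(R/\mc)$ because $\mc\subseteq(M_{j-1}:_R y_j)$. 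The two bookkeepings are equivalent (the paper's identity is your $\ell(\mm/x\mc)=\ell(\mm/xR)+\ell(R/\mc)$ in disguise), but your filtration makes the factor $\nu-1$ appear directly rather than as $\nu$ minus a correction term. The equality analysis also diverges pleasantly: the paper argues that equality forces $\mm/\mm\mc$ to be a free $R/\mc$-module and invokes the Koszul relations to conclude $\mm=\mc$, whereas you observe that $x\in(M_{j-1}:_R y_j)$ for every $j$, so equality forces $x\in\mc$, which you correctly show is equivalent to $\mc=\mm$ (via $\mm=x\ovR\cap R=x\ovR$ being an $\ovR$-ideal); this is a slightly more hands-on but equally valid route, and it needs the hypothesis $\nu\ge 2$ in the non-DVR case, which you justify. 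All the auxiliary claims you use ($y_j\notin M_{j-1}$ by minimality, $\mc\subsetneq R$ when $R$ is not a DVR, finiteness of $\ell(R/\mc)$) are available under the paper's standing assumptions, so there is no gap.
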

	
	\begin{proof}
		Being $\mc$ an ideal of $\overline R$, we have	$x\mc = x\overline R\mc = \mm\overline R\mc = \mm\mc$. Therefore the number of generators of $\mc$, which
		is the vector space dimension of $\mc/\mm\mc$, is also the length of $\mc/x\mc$, which is the
		multiplicity of $R$.
		Hence we have $\ell(R/\mc) + e(R) = \ell(R/\mc) + \ell(\mc/\mm\mc) = \ell(R/\mm\mc) = 1 + \ell(\mm/\mm\mc)$.
		
		Now $(R/\mc)^{\nu}$ maps onto $\mm/\mm\mc$, so  we get the  inequality
		$\ell(R/\mc) + e(R) \leq 1 + \nu\cdot
		\ell(R/\mc)$
		which is equivalent to inequality in the statement.
		
		It is clear that if $R$ is a DVR then $\nu=1$, $e(R)=1$ and thus the inequality is trivial. On the other hand, if $R$ is not a DVR (and thus is not regular) then equality occurs if and only if the map from $(R/\mc)^{\nu}$
		to $\mm/\mm\mc$ is an isomorphism, that is if and only if $\mm/\mm\mc$ is a free $R/\mc$ module. In particular, since the Koszul relations on the
		generators of the maximal ideal are always in the kernel of that map, equality can occur if and only if $\mm=\mc$.
	\end{proof}
	
In the next corollary we give a bound
on the Hilbert-Samuel multiplicity $e(\mc)$ of $\mc$ in function of the embedding dimension of $R$. Notice that this multiplicity is defined when the ideal $\mc$ is $\mm$-primary. Therefore, compared to the previous result, in the next result $R$ cannot be a DVR, since this would imply that $\mc=R$.
	\begin{cor}\label{pseudowilfring}
		Let $(R,\mm)$ be a one-dimensional, Cohen-Macaulay, analytically unramified, local ring, with embedding dimension $\nu$, such that the residue field $R/\mm=\Bbbk$ is infinite and the conductor $\mc$ is $\mm$-primary. Then
		 $$e(\mc) \le (\nu-1)\ell(R/\mc)^2+\ell(R/\mc).$$
		
		In particular, $e(\mc) \le \nu \cdot \ell(R/\mc)^2$. Moreover, then equality holds in both inequalities if an only if $\ell(R/\mc)=1$.
	\end{cor}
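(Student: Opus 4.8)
The plan is to combine the multiplicity bound from Theorem~\ref{dimd} with Lech's inequality, which relates the Hilbert-Samuel multiplicity of an $\mm$-primary ideal to the ordinary multiplicity. Specifically, for a one-dimensional Cohen-Macaulay local ring, Lech's inequality (\cite{L}[Theorem 3]) gives $e(\mc) \le d! \cdot e(R) \cdot \ell(R/\mc)$, which in dimension $d=1$ reads $e(\mc) \le e(R)\cdot\ell(R/\mc)$. Since $\mc$ is assumed to be $\mm$-primary, $R$ cannot be a DVR (otherwise $\mc=R$), so Theorem~\ref{dimd} yields $e(R)\le (\nu-1)\ell(R/\mc)+1$. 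Substituting this into Lech's bound produces
\begin{equation*}
e(\mc)\le e(R)\cdot\ell(R/\mc)\le \bigl((\nu-1)\ell(R/\mc)+1\bigr)\ell(R/\mc)=(\nu-1)\ell(R/\mc)^2+\ell(R/\mc),
\end{equation*}
which is exactly the first claimed inequality. The second inequality $e(\mc)\le\nu\cdot\ell(R/\mc)^2$ then follows immediately, since $(\nu-1)\ell(R/\mc)^2+\ell(R/\mc)\le(\nu-1)\ell(R/\mc)^2+\ell(R/\mc)^2=\nu\cdot\ell(R/\mc)^2$, using only that $\ell(R/\mc)\ge 1$.

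For the equality characterization, I would trace through the two steps and identify when each becomes an equality. The passage from the first inequality to the second is an equality precisely when $\ell(R/\mc)=\ell(R/\mc)^2$, i.e.\ when $\ell(R/\mc)=1$. So if $\ell(R/\mc)=1$, both inequalities are tight provided the first is; and indeed when $\ell(R/\mc)=1$ one has $\mc=\mm$, so $R$ has minimal multiplicity in a strong sense and one can check directly that $e(\mc)=e(\mm)=\nu=(\nu-1)\cdot 1+1$, giving equality throughout. Conversely, if equality holds in either inequality, the comparison $(\nu-1)\ell(R/\mc)^2+\ell(R/\mc)\le\nu\cdot\ell(R/\mc)^2$ forces $\ell(R/\mc)=1$ for the second, and for the first one appeals to the equality clause of Theorem~\ref{dimd}, which (since $R$ is not a DVR here) requires $\mm=\mc$ and hence $\ell(R/\mc)=1$.

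The main subtlety to verify is that Lech's inequality applies in the required form and that equality in Lech's step is compatible with the stated equality condition. In dimension one Lech's inequality is clean, so the only genuine point of care is making sure the equality analysis is driven by the Theorem~\ref{dimd} equality clause rather than requiring a separate sharp case of Lech's inequality; the cleanest route is to observe that $\ell(R/\mc)=1$ simultaneously forces $\mm=\mc$ and collapses both inequalities to equalities, so the "if and only if" hinges entirely on the $\ell(R/\mc)=1$ condition and no independent discussion of Lech's equality case is needed.
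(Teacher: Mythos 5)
Your proposal is correct and follows essentially the same route as the paper: apply Lech's inequality in dimension one to get $e(\mc)\le e(R)\ell(R/\mc)$, substitute the bound of Theorem~\ref{dimd}, and reduce the equality analysis to the equality clause of that theorem, observing that $\ell(R/\mc)=1$ forces $\mm=\mc$ and hence collapses both steps to equalities. No gaps.
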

	
	\begin{proof}
		We use the following well-known Theorem by Lech, which states that, for a $\mm$-primary ideal $I$, $e(I) \le d!e(R)\ell(R/I)$ (cf. \cite{L}[Theorem 3]) - where $e(I)$ is the Hilbert-Samuel multiplicity of $I$ and $d$ denotes the Krull dimension of $R$. In our context, this gives $$e(\mc) \le e(R)\ell(R/\mc),$$
		and replacing $e(R)$ with the bound obtained in Theorem \ref{dimd} we obtain our first inequality. The second one is straightforward.
		
		From the above argument we deduce immediately that equality $e(\mc) = (\nu-1)\ell(R/\mc)^2+\ell(R/\mc)$ is attained 
		if and only if $e(R)=(v-1)\ell(R/\mc)+1$ (cf. Theorem \ref{dimd}) and   $e(\mc)=e(R)\ell(R/\mc)$. 
		We observed in Theorem \ref{dimd} that the first equality holds if and only if $\ell(R/\mc)=1$; this fact is equivalent to $\mm=\mc$, that gives $e(\mc)=e(R)$; hence we get the second part of the thesis.
	\end{proof}
	
    Corollary \ref{pseudowilfring} extends \cite{DM}[Theorem 1], where a similar inequality was proved for numerical semigroup rings with combinatorial techniques. If $S$ is a numerical semigroup (i.e., a submonoid of $\mathbb{N}$ such that $\mathbb{N} \setminus S$ is finite) and $R=\Bbbk[[t^s \mid s \in S]]$ is a numerical semigroup ring, the invariants of $R$ appearing in this section can be read from combinatorial invariants of the numerical semigroup $S$. In fact, if we denote by $M= S \setminus \{0\}$, we have $e(R)= e(S) := \min M$, $\nu = \nu(S) := |M \setminus 2M|$, $e(\mc)= c(S) := \max ( \mathbb{Z} \setminus S) + 1$ and $\ell(R/\mc)=n(S) := |S \cap [0,c(S)-1]|$ (we refer to \cite{FGH} for a dictionary on the relation between invariants of numerical semigroups and their associated rings). In particular, the equality $e(\mc)=c(S)$ holds since, if $(y)$ is a minimal reduction of $\mc$, we can compute $e(\mc)$ as $\ell(R/(y))=\ell(\mc/y\mc)=\ell(\overline R/y\overline R)=\ell(\overline R/\mc)$. However, the following remark shows that these techniques can be interpreted by using Lech's inequality.
	
	\begin{remark}\label{depth}
		In \cite{DM}[Theorem 2] we have proved that $e(S) \le (v(S)-1)n(S)+1$ by studying the structure of the numerical semigroup, namely its Apéry set. Then we have considered the \emph{depth} of a numerical semigroup $q := \lceil \frac{c(S)}{e(S)} \rceil$, which is a combinatorial invariant that often appears in relation to Wilf's conjecture (cf. next section for its statement). Since, by construction, $\{0,e(S),\ldots,(q-1)\cdot e(S)\} \subseteq S \cap [0,c(S)-1]$, we have $q \le n(S)$, and thus, multiplying 
		the above inequality by $q$, we get the desired result $$c(S) \le (\nu-1)n(S)^2+n(S).$$
		
		Here, Theorem \ref{dimd} achieves the same as \cite{DM}[Theorem 2] without relying on any combinatorial data. Moreover, since the invariants involved are all integers, the inequality $q = \lceil \frac{c(S)}{e(S) }\rceil\le n(S)$ is equivalent to $\frac{e(\mc)}{e(R)} \le \ell(R/\mc)$, which is exactly Lech's inequality in dimension one.  
	\end{remark}
	
	We notice that Lech's inequality for $\mc$ can be an equality also when $\mc\neq \mm$.
	In fact, if we restrict to numerical semigroup rings, Lech's inequality can be rephrased as $c(S) \le e(S) \cdot n(S)$, and equality is attained if and only if $n(S)=\frac{c(S)}{e(S)}$, that is, setting $k:=n(S)$, if and only if the associated numerical semigroup is of the form $S_k=\{0,e(S),\ldots,(k-1) \cdot e(S)\} \cup \{k \cdot e(S),\rightarrow\}$, thus providing examples of one-dimensional local Cohen-Macaulay rings, with arbitrarily large $\ell(R/\mc)$ attaining equality in Lech's inequality for the conductor.

	\begin{example}\label{dim1ex}
		In the regular local ring $K[[x,y,z]]$, consider the ideal $I=(x^2-y,x^3-z) \cap (x^2-z,x^3-y^2)$, and let $R=K[[x,y,z]]/I$. This ring can be seen as the image of the map $K[[x,y,z]] \rightarrow K[[t]] \times K[[u]]$ that sends $x \mapsto (t,u^2)$, $y \mapsto (t^2,u^3)$, $z \mapsto (t^3,u^4)$. This ring is associated to a singularity with two branches, and from its value semigroup in $\mathbb{N}^2$ we can deduce that $e(R)=3$, $e(\mc)=5$, $\nu=3$ and $\ell(R/\mc)=2$ (see \cite{BDF} for this kind of computations), and thus both the inequalities of Theorem \ref{dimd} and Corollary \ref{pseudowilfring} hold.
		The second one is very far from being sharp, and this happens because both Theorem \ref{dimd} and Lech's inequalities are not sharp for this ring.
	\end{example}

	\begin{discussion} The main assumptions made for $R$ in this paper are motivated as follows:
		we need the ring to be analytically unramified because in our argument we need that $\mc$ is a $\mm$-primary ideal (and the finiteness of $\ell(R/\mc)$), while the Cohen-Macaulayness is needed to compute the multiplicity of $R$ as a length as in \cite{HS}[Proposition 11.2.2]. Apparently, it might seem unclear whether the hypothesis that the ring has dimension one is needed, as one might think that our arguments could be adapted for higher dimension (considering a suitable minimal reduction of $\mm$);
        however, the set of rings with dimension $d > 1$ satisfying all the prescribed hypotheses is empty. This follows from \cite{G}[Proposition 5.21]: if $R$ is a reduced ring such that $\overline{R}$ is a finitely generated $R$-module, then the associated primes of the conductor $\mc$ have depth equal to one;
		therefore if $\mc$ is $\mm$-primary, $\mm$ must have depth equal to one, and thus for $d > 1$ the ring cannot be Cohen-Macaulay.
		
		It would be interesting to find a set of hypotheses that would allow to obtain upper bounds generalizing the one found in Theorem \ref{dimd} (and thus depending on $\nu-d$), for the multiplicity of rings with dimension $d > 1$. To this aim one should either propose a different argument not relying on the Cohen-Macaulay assumption or find a different quantity to substitute $\ell(R/\mc)$.
	\end{discussion}
	
	We conclude this section giving a class of analytically unramified, not Cohen Macaulay rings, with dimension $d > 1$, for which the conductor is $\mm$-primary.
	In a particular case we compute the invariants involved in our bound and we show that it still holds true (with $\nu-d$ in place of $\nu-1$).
	
	\begin{example}
		Let $(T, \mathfrak n)$ be a Cohen-Macaulay local ring which is integrally closed in its total ring of fractions $Q(R)$, and let $I$ be an $\mathfrak n$-primary (hence regular) ideal.  Let $\pi: T \rightarrow T/I$ be the natural surjection, let $B$ be a local Artinian subring of $T/I$ such that $B \subseteq T/I$ is finite. Let $R=\pi^{-1}(B)$. Hence we have the following pullback diagram:
		
		\begin{center}
			\begin{tikzcd}[row sep =1.5em, column sep =1em]
				R& & B \\
				T & & T/I \\
				\arrow[rightarrow, from=1-1, to=1-3,"\pi"]
				\arrow[rightarrow, from=2-1, to=2-3,"\pi"]
				\arrow[rightarrow, from=1-1,to=2-1,"\overline{i}"]
				\arrow[rightarrow,from=1-3,to=2-3,"i"]
			\end{tikzcd}
		\end{center}
		Notice that, from this diagram, $R$ can be described as $$R=\{(r,b) \ | \ \pi(r)=i(b)\} = \{(r,b) \ | \ r + I = b \} \subseteq T \times B.$$
		In light of the properties  of pullbacks proved in \cite{F}, we get that the following facts hold for $R$:
		\begin{enumerate}
			\item the homomorphism  $\overline i$
			is injective (\cite{F}[Corollary 1.5 (4)]), so $R$
			can be viewed as a subring of $T$;
			\item $I \subseteq Jac(T)=\mathfrak n$ and $B$ local imply that $R$ is a local ring (and we will denote its maximal ideal by $\mm$)
			(\cite{F}[Corollary 1.5 (1)]);
			\item since $I$ is a regular ideal, we get $Q(R)=Q(T)$ (\cite{F}[Corollary 1.5 (6)]);
			\item since $B \subset T/I$ is a finite extension, also $R\subseteq T$ is finite, thus integral (\cite{F}[Corollary 1.5 (4)]). So $T=\overline R$  is the integral closure of $R$ in $Q(R)$ and ${\rm dim} R = d$;
			\item $R$ is a Noetherian ring (\cite{F}[Proposition 1.8]);
			\item by construction, $I$ is an ideal of $R$ and $I \subseteq R : T$.  By $\dim T/I = 0$, it follows that $I$ is $\mm$-primary, and this implies that the conductor $R:T$ is also $\mm$-primary.
		\end{enumerate}
		
		The ring $R$ has dimension $d > 1$ and satisfies all the hypotheses outlined in the discussion: thus $R$ is not Cohen-Macaulay. 
        
		Notice that in general, the behavior of the Cohen-Macaulay property under pullbacks is not clearly understood; also, the invariants involved in this construction are difficult to compute. However, in some specific examples, we could verify that our inequality still holds, as we can see in the following particular situation.

		Let $T=K[[x,y]]$, $I=(x^2,y^2)$, $B=K[[x]]/(x^2)$. Thus we get the following diagram
		\begin{center}
			\begin{tikzcd}[row sep =1.5em, column sep =1em]
				R & & \frac{K[[x]]}{(x^2)} \\
				K[[x,y]] & & \frac{K[[x,y]]}{(x^2,y^2)} \\
				\arrow[rightarrow, from=1-1, to=1-3,"\pi"]
				\arrow[rightarrow, from=2-1, to=2-3,"\pi"]
				\arrow[rightarrow, from=1-1,to=2-1,"\overline{i}"]
				\arrow[rightarrow,from=1-3,to=2-3,"i"]
			\end{tikzcd}
		\end{center}
        from which we can deduce that $R=K[[x,x^2y,y^2,y^3]]$. While it is a two-dimensional, not Cohen-Macaulay ring, a direct computation with Macaulay2 (\cite{M2}) yields $e(R)=2$, $\nu=4$, $\ell(R/\mc)=2$, where $\mc=R:T=(x^2, x^2y, y^2,y^3)$ (as ideal of $R$): thus the inequality in Theorem \ref{dimd} holds (with $\nu-2$ in place of $\nu-1$).
	\end{example}
	
	\section{Wilf's conjecture for almost Gorenstein rings}
	
	For numerical semigroup rings the long-standing Wilf's Conjecture states that the bound obtained in \cite{DM}[Theorem 1] (that, more generally, coincides with the bound obtained in Corollary \ref{pseudowilfring}) can be divided by $\ell(R/\mc)$, namely that $e(\mc) \le v \cdot \ell(R/\mc)$. This conjecture was first proposed in 1978 (\cite{Wilf}), and seems very elusive. Many partial results were obtained by studying the combinatorics of numerical semigroups, but it is still unclear whether this inequality is specific to numerical semigroup rings, and thus dependent on some specific combinatorial property, or it could be extended to larger classes of rings through algebraic means. Our results, especially Remark \ref{depth}, suggest that the latter might be the case.
	
	In light of this, we ask whether the bound obtained in Corollary \ref{pseudowilfring} could be improved in a way that mirrors Wilf's Conjecture.
	\begin{que}\label{pseudoWilfConj} 
		Keeping the hypotheses of Corollary \ref{pseudowilfring}, is it true that $e(\mc) \le \nu \cdot \ell(R/\mc)$?
	\end{que}
	In this section, we affirmatively answer to the above question when the ring is almost Gorenstein.
	
	Almost Gorenstein rings are a class of Cohen-Macaulay rings introduced by Barucci and Fröberg in 1997 (cf. \cite{BF}), which are close to Gorenstein rings. This class of rings was originally defined in the context of analytically unramified rings of dimension one, though the definition was later generalized by Goto, Matsuoka and Phuong to include the ramified case in dimension one (cf. \cite{GM}) and by Goto, Takahashi and Taniguchi (cf. \cite{GTT}) for the higher dimensional case. Since in Theorem \ref{dimd} we consider analytically unramified rings in dimension one, we will follow the original definition.
	
	We recall that, under the assumption of the present paper, $R$ always has a canonical module $\omega$ which is isomorphic to a fractional ideal (see \cite{HK}[Satz 6.21]), so it is usually called canonical ideal (saying fractional ideal we always assume that it contains a non-zero divisor of $Q(R)$). As fractional ideal,
	a canonical module is characterized by the property that, for any other fractional ideal $J$, the equality $J=\omega : (\omega : J)$ holds.  Moreover, any two canonical ideals $\omega, \omega'$ are isomorphic, i.e. in our hypotheses, there is a non-zero divisor $z$ of $Q(R)$ such that $\omega=z\omega'$. This fact implies that we can always choose 
	$\omega$ with the property
	that $R \subseteq \omega \subseteq \overline{R}$. We also recall that if $I \subseteq J$ are fractional ideals, then 
	$\ell(I/J)=\ell((\omega:J)/(\omega:I))$ (cf. \cite{HK}[Bermerkung 2.5]). Moreover, with this choice of $\omega$ we have $\omega:\overline R=\omega:\omega\overline R=(\omega:\omega)\overline R=R:\overline R=\mc$.

		
	Now we can give the definition of almost Gorenstein rings in our setting: a one-dimensional, analytically unramified, Cohen-Macaulay local ring with infinite residue field,  is \emph{almost Gorenstein} if $\omega \subseteq \mm : \mm$, where $\omega$ is any canonical ideal $R\subseteq \omega \subseteq \overline R$ (by \cite{BF}[Definition-Proposition 20] the definition is independent of the choice of $\omega$). 
	
	Numerical semigroups associated to almost Gorenstein numerical semigroup rings are called \emph{almost symmetric}. To give this definition, we need to define canonical ideals in the context of numerical semigroups (we refer to \cite{ADG} for a detailed treatment of canonical ideals in this context). Let $S$ be a numerical semigroup. A \emph{relative ideal} $I$ of $S$ is a subset of $\mathbb{Z}$ such that $I + S \subseteq I$ and $s+I = \{s+i \ | \ i \in I\} \subseteq S$ for some $s \in S$. An \emph{ideal} of $S$ is a relative ideal $I$ such that $I \subseteq S$. The ideal $M_S= S \setminus \{0\}$ is called the \emph{maximal ideal} of $S$. For a numerical semigroup $S$, denote by $F(S) := \max \mathbb{Z} \setminus S = c(S)-1$ the \emph{Frobenius number} of $S$. The set $$\omega_S := \{F(S)- \alpha \ | \ \alpha \not \in S\} \subseteq \mathbb{N}$$
		is a relative ideal of $S$ such that $S \subseteq \omega_S$, which is called the (standard) \emph{canonical ideal} of $S$.
	
	Let $R$ be the numerical semigroup ring $R = \Bbbk[[t^s \ | \ s \in S]]$ associated to a numerical semigroup $S$. Then a fractional ideal $\omega$ such that $R \subseteq \omega \subseteq \overline{R}$ is a canonical ideal of $R$ if and only if its value set $v(\omega)$ is the standard canonical ideal of $S$ (see \cite[Satz 5]{J}), whence $\omega \cong \Bbbk[[t^{F(S)-\alpha} \ | \ \alpha \not \in S]].$
    
    A numerical semigroup $S$ with (unique) minimal generating set $\{n_1,\ldots,n_\nu\}$ is \emph{almost symmetric} if $\omega_S \subseteq M_S - M_S := \{z \in \mathbb{Z} \ | \ z + M_S \subseteq M_S\}$, or equivalently if $F(S)-\alpha+n_i \in S$ for every $\alpha \not \in S$ and $i=1,\ldots,\nu$. These semigroups were originally introduced alongside almost Gorenstein rings in Barucci and Fröberg's work, but they are also studied on their own. In particular, they enjoy nice symmetrical properties (cf. \cite{N}). Moreover, Wilf's conjecture is known to be true for this class of numerical semigroups, but all the proofs of this fact rely on numerical arguments and cannot be adapted for rings (see \cite{B} and \cite{DM}).
	
	\begin{thm}\label{AGWilf}
		Let $(R,\mm)$ be a one-dimensional, analytically unramified, Cohen-Macaulay local ring with infinite residue field. If $R$ is almost Gorenstein, then $e(\mc) \le \nu \cdot \ell(R/\mc)$.
	\end{thm}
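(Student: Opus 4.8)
The plan is to convert the Hilbert--Samuel multiplicity $e(\mc)$ into a length inside $\ovR$ and then run all the bookkeeping through a canonical ideal $R\subseteq\omega\subseteq\ovR$. First I would record that $e(\mc)=\ell(\ovR/\mc)$. Since $\Bbbk$ is infinite and $\ovR$ is a finite product of discrete valuation rings, a generic element $y\in\mc$ is a minimal reduction of $\mc$ with $y\ovR=\mc$; exactly as in Section~1 this gives $e(\mc)=\ell(R/(y))=\ell(\ovR/y\ovR)=\ell(\ovR/\mc)$. Writing $\ell(\ovR/\mc)=\ell(\ovR/R)+\ell(R/\mc)$, the desired inequality $e(\mc)\le\nu\,\ell(R/\mc)$ is then equivalent to the cleaner statement
\[
\ell(\ovR/R)\le(\nu-1)\,\ell(R/\mc).
\]

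Next I would inject the almost Gorenstein hypothesis through the identity $\mm\omega=\mm$, which holds because $\mm=R\mm\subseteq\omega\mm\subseteq\mm$ (here $1\in R\subseteq\omega$ and $\omega\subseteq\mm:\mm$). This has two effects. On the one hand $\mm$ annihilates $\omega/R$, so $\omega/R$ is a $\Bbbk$--vector space and $\ell(\omega/R)=\dim_\Bbbk\omega/\mm-1=\mu(\omega)-1=t-1$, where $t$ denotes the Cohen--Macaulay type. On the other hand every minimal generator $m$ of $\mm$ satisfies $m\omega\subseteq\mm\omega=\mm$, hence lies in $\mm:\omega$; this furnishes the generator mentioned in the introduction. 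Combining this with the canonical duality $\ell(R/\mc)=\ell((\omega:\mc)/(\omega:R))=\ell(\ovR/\omega)$ (using $\omega:\mc=\ovR$ and $\omega:R=\omega$), I obtain
\[
\ell(\ovR/R)=\ell(\ovR/\omega)+\ell(\omega/R)=\ell(R/\mc)+(t-1).
\]
Substituting, the target inequality collapses to $t-1\le(\nu-2)\,\ell(R/\mc)$. As $R$ is not a DVR we have $\mc\subsetneq R$, so $\ell(R/\mc)\ge1$ and $\nu\ge2$; thus it suffices to establish the type bound $t\le\nu-1$, which is in any case precisely what is needed in the tight case $\mc=\mm$.

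The heart of the matter is therefore the estimate $t\le\nu-1$ (sharply, $t-1\le(\nu-2)\ell(R/\mc)$), and this is where I expect the genuine work to be. My plan is to realize the type as $t=\ell((\mm:\mm)/R)$ and to study multiplication by a minimal generator $z\in\mm:\omega$, chosen distinct from the minimal reduction $x$. Since $z\omega\subseteq\mm$ and $z$ is a non--zerodivisor, the map $w\mapsto zw$ induces an injection $\omega/R\hookrightarrow\mm/(z)$. The point is to push this into a subquotient of $\mm$ of the right size: composing with $\mm/(z)\twoheadrightarrow\mm/(\mm^2+(z))$, a space of dimension $\nu-1$, would give $t-1\le\nu-1$, and eliminating one more dimension—by also factoring out the class of $x$, landing in $\mm/(\mm^2+(x,z))$ of dimension $\nu-2$—is what should yield the sharp bound.

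The main obstacle is exactly the independence of the products $zw$ modulo $\mm^2$: the composite above is injective only if $zw\notin\mm^2+(z)$ (respectively $\mm^2+(x,z)$) for every $w\in\omega\setminus R$, i.e.\ only if $\{w\in\omega:zw\in\mm^2\}\subseteq R$. A priori such a product can land in $\mm^2$—this already happens for numerical semigroup rings with a carelessly chosen generator—so the almost Gorenstein structure $\mm\omega=\mm$, together with the freedom in the choice of $z$, must be used to exclude it. This is the ring--theoretic counterpart of the combinatorial input behind the almost symmetric case in \cite{B} and \cite{DM}, namely that for a pseudo--Frobenius number $f$ and a minimal generator $n_j$ the sum $f+n_j$ is forced to lie among the small elements of the semigroup; converting that mechanism into the containment $\{w\in\omega:zw\in\mm^2\}\subseteq R$ is the step I would expect to require the most care.
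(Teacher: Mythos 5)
Your opening reductions are correct and, in fact, equivalent to what the paper does: $e(\mc)=\ell(\ovR/\mc)$, the reformulation $\ell(\ovR/R)\le(\nu-1)\ell(R/\mc)$, the duality computation $\ell(\ovR/\omega)=\ell(R/\mc)$, and $\ell(\omega/R)=t-1$ under the almost Gorenstein hypothesis, so that the theorem is equivalent to $t-1\le(\nu-2)\ell(R/\mc)$. The genuine gap is what follows. First, you discard the factor $\ell(R/\mc)$ and reduce to the type bound $t\le\nu-1$, which you then do not prove: you yourself flag the injectivity of $\omega/R\to\mm/(\mm^2+(z))$ as the step ``requiring the most care,'' and that is exactly where the entire content of the theorem sits. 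Second, this particular reduction cannot succeed in general: $t\le\nu-1$ for almost Gorenstein rings is precisely Moscariello's conjecture on the type of almost symmetric numerical semigroups, established only in low embedding dimension and now known to admit counterexamples; it is strictly stronger than the theorem. More structurally, any argument that injects $\omega/R$ into a $\Bbbk$-vector space of dimension $\nu-1$ or $\nu-2$ can never prove more than $t-1\le\nu-1$, so it is incapable of producing the factor $\ell(R/\mc)$ that the correct inequality requires.

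The paper recovers that factor by working over $R/\mc$ rather than over $\Bbbk$. With $\varphi:(R/\mc)^{\nu}\twoheadrightarrow\mm/x\mc$ the presentation coming from a minimal generating set $\{x,x_2,\dots,x_\nu\}$ of $\mm$ ($x$ a minimal reduction), one has $\ell(\ker\varphi)=(\nu-1)\ell(R/\mc)-e(R)+1$, and the theorem becomes $\ell(x\ovR/\mm)\le\ell(\ker\varphi)$, where $\ell(x\ovR/\mm)=\ell(\omega/(x^{-1}\mc\cap\omega))$ by canonical duality. The needed injection is the Koszul-type map $\overline{\alpha}\mapsto(-\overline{\alpha x_2},\overline{\alpha x},\overline 0,\dots,\overline 0)$ into $(R/\mc)^\nu$: it is well defined because $\alpha x,\alpha x_2\in\mm$ (this is where $\omega\subseteq\mm:\mm$ enters, exactly as in your observation that minimal generators lie in $\mm:\omega$), it lands in $\ker\varphi$ for trivial Koszul reasons, and it is injective simply because $\alpha x\notin\mc$ whenever $\alpha\notin x^{-1}\mc$ --- no independence modulo $\mm^2$ is ever needed. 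To repair your argument, keep the target $t-1\le(\nu-2)\ell(R/\mc)$ and aim your multiplication maps at $R/\mc$-modules rather than at $\mm/\mm^2$.
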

	\begin{proof}
		Let $(x)$ be a minimal reduction of $\mm$. We have that $e(\mc)=\ell(\overline R/\mc)=\ell(\overline R/x\overline R)+\ell(x\overline R/\mm)+\ell(\mm/\mc)=e(R)+\ell(x\overline R/\mm)+\ell(\mm/\mc)$. 
		Keeping in mind the proof of Theorem \ref{dimd}, we also have $\ell(\mm/x\mc)=e(R)+\ell(\mm/\mc)$.
		Hence $e(\mc)=\ell(x\overline R/\mm)+\ell(\mm/x\mc)$.
		Moreover, we have the surjective homomorphism of $R$-modules
		$\varphi:(R/\mc)^{\nu} \rightarrow \mm/x\mc$, that gives 
		$\nu\cdot \ell(R/\mc)=\ell(\mm/x\mc)+
		\ell(\ker\varphi)$. Therefore, 
		if we prove that $\ell(x\overline R/\mm)\leq \ell(\ker\varphi)$, we obtain the thesis.
		
		We observe that $\ell(x\overline R/\mm)=\ell((\omega:\mm)/(\omega:x\overline R))$. Now $\omega:x\overline R=x^{-1}(\omega:\overline R)=x^{-1}\mc$ and, remembering that $\mm=x\overline R\cap R$ (cf. Remark \ref{m}), $\omega:\mm=
		\omega:(x\overline R\cap R)=(\omega:x\overline R)+(\omega:R)=x^{-1}\mc+\omega$. Therefore, $\ell((\omega:\mm)/(\omega:x\overline R))=\ell((x^{-1}\mc+\omega)/x^{-1}\mc)=\ell(\omega/(x^{-1}\mc\cap\omega))$.
		
		Hence, to prove the thesis, it is enough to show that we can embed 
		$\omega/(x^{-1}\mc\cap\omega)$ in $\ker\varphi$.
		
		If $\{x,x_2,\dots,x_{\nu}\}$ is a minimal generating system of $\mm$,
		we can fix any $x_i\neq x$ (for simplicity of notation we choose i=2) and define $\psi:\omega/(x^{-1}\mc\cap\omega) \rightarrow (R/\mc)^{\nu}$, setting $\psi(\overline{\alpha})=(-\overline{\alpha x_2}, \overline{\alpha x}, \overline 0, \dots, \overline 0)$ (where the symbol $\overline{()}$ has the obvious different meanings). Notice that $\psi$ is a well-defined 
		homomorphism of $R$-modules; in fact, if $\alpha-\beta \in x^{-1}\mc$, both $(\alpha-\beta)x$ and $(\alpha-\beta)x_2=(\alpha-\beta)xx_2x^{-1}$
		belong to $\mc$, since $x_2x^{-1}\in \overline R$. Moreover, if $\overline{\alpha} \neq \overline 0$ in $\omega/(x^{-1}\mc\cap\omega)$, and so $\alpha \notin x^{-1}\mc$, then $\overline{\alpha x} \neq\overline 0$ in $R/\mc$; hence $\psi$ is injective. As for 
		$\alpha x_2$, it could belong or not belong to $\mc$; in the first case the image of $\overline{\alpha}$ has zero first component.
		
		Consider now $\varphi((-\overline{\alpha x_2}, \overline{\alpha x}, \overline 0, \dots, \overline 0))=\overline{-\alpha x_2x+\alpha x x_2}$. 
		If $\alpha x_2 \notin \mc$, then $\alpha x_2 x \notin x\mc$, and  
		$\overline{-\alpha x_2x+\alpha x x_2}=\overline 0$; on the other hand,
		if $\alpha x_2 \in \mc$ we have
		$\psi(\overline{\alpha})=(\overline{0}, \overline{\alpha x}, \overline 0, \dots, \overline 0)$, and applying $\varphi$ we get $\overline{\alpha xx_2}=\overline 0$,
		since $\alpha xx_2 \in x\mc$. 
		The proof is complete.
	\end{proof}
	

	\begin{example}
		\begin{enumerate}
			\item Let $R=\Bbbk[[t^7,t^9,t^{11},t^{19}]]$ be the numerical semigroup ring associated to the numerical semigroup $S= \langle 7,9,11,19 \rangle$. $R$ is almost Gorenstein since $S$ is an almost symmetric numerical semigroup. From the correspondence between numerical semigroup rings and their value semigroup, we can compute $e(R)=7$, $\ell(R/\mc)=12$, $\nu = 4$, $e(\mc)=25$ (in particular the inequality $e(\mc) \le v \cdot \ell(R/\mc)$ holds). From the proof of Theorem \ref{AGWilf} we can see that $\ell(\mm / x \mc) = e(R)+\ell(\mm/\mc)=7+11=18,$ and thus $\ell(\ker \varphi)=30$. The standard canonical ideal of $S$ is $\omega_S=S \cup (12 + S)$, therefore a canonical ideal of $R$ is $\omega= R + t^{12}R$. A minimal reduction of $R$ is $x=t^7$, which is part of the minimal generating system $\{t^7,t^9,t^{11},t^{19}\}$ of $\mm$. The map $\psi: \omega/(x^{-1}\mc \cap \omega) \rightarrow (R/\mc)^\nu$ defined by considering $x_2=t^9$ is such that $\psi(\overline{t^{12}})=(-\overline{t^{21}},\overline{t^{19}},\overline{0},\overline{0})$. On the other hand, choosing $x_2=t^{19}$ yields the map $\psi$ such that $\psi(\overline{t^{12}})=(-\overline{t^{29}}=\overline{0},\overline{t^{19}},\overline{0},\overline{0})$.
            In the first case, when we apply the homomorphism $\varphi$ of the previous theorem we get $\overline{-t^{28}+t^{28}}=\overline 0$; in the second case we get $\overline{t^{38}}=\overline0$, since $t^{38} \in t^7\mc$.
			\item Let $R=K[[x,y,z,w]]/((x^4-y^3,z,w)\cap(x,y,z^4-w^3))$, which can be seen as the image of the map
			$K[[x,y,z,w]] \rightarrow K[[t]]\times K[[u]]$ defined sending $x \mapsto (t^3,0)$, $y\mapsto (t^4,0)$, $z\mapsto (0,u^3)$ and $w\mapsto (0,u^4)$. To this ring we can associate a good subsemigroup of $\mathbb N^2$ (see \cite{BDF}); this semigroup is depicted with black
			dots in the figure below and, following the computation of lengths explained in \cite{BDF}, it can be used to  determine the invariants involved in our results.  More precisely, we have that $e(R)=6$, $\nu=4$, $e(\mc)=12$ and $\ell(R/\mc) =5$. 
			Moreover, $\ell(\mm/x\mc)=e(R)+\ell(\mm/\mc)=6+4=10$; so $\ell(\ker \varphi)=10$.
			
			Also canonical ideals between $R$ and $\overline R$ can be characterized by their value set, which is depicted with black dots  and with circles in the same figure. In particular, a canonical ideal between $R$ and $\overline R$ is $R+(t^3,1)R+(t^5,u^5)R$. Again using the computation of lengths explained in \cite{BDF}, one can easily check that $\ell(x\overline R/\mm)=\ell(\omega/(x^{-1}\mc \cap \omega))=2$.
			\begin{center}
				\begin{tikzpicture}[scale=0.5]
					\coordinate (Origin)   at (0,0);
					\coordinate[label=below:0] (zero) at (-0.6,-0.2);
					\coordinate (XMin) at (-1,0);
					\coordinate (XMax) at (11,0);
					\coordinate (YMin) at (0,-1);
					\coordinate (YMax) at (0,11);
					\draw [-latex] (XMin) -- (XMax);
					\draw [-latex] (YMin) -- (YMax);
					
					\node[draw,circle,inner sep=1.2pt,fill] at (0,0) {};
					\node[draw,circle,inner sep=1.2pt,fill] at (3,3) {};
					\node[draw,circle,inner sep=1.2pt,fill] at (3,4) {};
					\node[draw,circle,inner sep=1.2pt,fill] at (4,3) {};
					\node[draw,circle,inner sep=1.2pt,fill] at (4,4) {};
					
					\foreach \x in {1,...,9}{
						\coordinate[label=below:\x] (\x) at (\x,-0.2);
						\coordinate[label=left:\x] (\x') at (-0.2,\x);
						\coordinate (start\x) at (\x,-0.1);
						\coordinate ('start\x) at (-0.1,\x);
						\coordinate (end\x) at (\x,10);
						\coordinate ('end\x) at (10,\x);
					}
					
					\foreach \x in {6,...,9}{
						\foreach \y in {6,...,9}{
							\node[draw,circle,inner sep=1.2pt,fill] at (\x,\y) {};
						}
					}
					
					\foreach \x in {3,4,5}{
						\node[draw,circle,inner sep=1.2pt,fill=white] at (\x,5) {};
						\node[draw,circle,inner sep=1.2pt,fill=white] at (5,\x) {};
					}
					
					\foreach \x in {3,4}{
						\foreach \y in {6,...,9}{
							\node[draw,circle,inner sep=1.2pt,fill] at (\x,\y) {};
							\node[draw,circle,inner sep=1.2pt,fill] at (\y,\x) {};
						}
					}
					
					\foreach \x in {3,...,9}{
						\node[draw,circle,inner sep=1.2pt,fill=white] at (\x,0) {};
						\node[draw,circle,inner sep=1.2pt,fill=white] at (0,\x) {};
					}
					
					\foreach \x in {1,...,9}{
						\draw [thin, gray] (start\x) -- (end\x);
						\draw [thin, gray] ('start\x) -- ('end\x);}
				\end{tikzpicture}
			\end{center}     
			This ring is almost Gorenstein; in fact, 
			it is not difficult to see that the classes of $x,y,z,w$ belong to $\mm:\omega$. To follow the proof of the Theorem \ref{AGWilf} we need a minimal reduction of $\mm$, which is e.g. $\overline{x+y}$, that corresponds to $(t^3,u^3)$;
			we have $\mm=(\overline{x+y}, \overline x, \overline z, \overline w)$.
			Choose any element in $\omega$, not belonging to $(t^3,u^3)^{-1}\mc$, i.e. with value on the axes, for example $(t^3,1)$; we get $\varphi(\psi((\overline{t^3,1})))=\varphi((\overline{t^6,0}),(\overline{t^6,u^3}),\overline 0, \overline 0)=\overline{(t^9,0)-(t^9,0)}=\overline 0$. 
		\end{enumerate}
	\end{example}
	
	To conclude the paper we consider a generalization of almost Gorenstein rings, i.e. rings with canonical reduction, introduced and studied in \cite{R}. A ring $(R,\mm)$, satisfying the hypotheses of this paper, has \emph{canonical reduction} if, by definition, there exists a canonical ideal that is a reduction of $\mm$. If we consider $\omega$ such that $R \subseteq \omega \subseteq \overline R$, this is equivalent to say that there exists a minimal reduction $(x)$ of $\mm$ such that $x\omega \subseteq \mm$. In  \cite[Theorem 3.13]{R} it is proved that a numerical semigroup ring $R$ has a canonical reduction if and only if the associated numerical semigroup $S$ satisfies the hypothesis $F(S)-\alpha+e(S) \in S$ for every $\alpha \not \in S$ or, equivalently, that $\omega_S \subseteq M_S - e(S)$. Numerical semigroups satisfying this property were already introduced independently in \cite{BFR} under the name of \emph{positioned} numerical semigroups. Notice that since $e(S)$ is a minimal generator of $S$, then almost symmetric numerical semigroups are positioned, reflecting the relation between almost Gorenstein rings and rings with canonical reduction. Moreover, in \cite{BFR} it has been proved that positioned numerical semigroups satisfy Wilf's conjecture, but, again, the proof relies on numerical arguments. Repeating the proof of Theorem \ref{AGWilf}, we obtain that rings with canonical reduction satisfy the inequality $e(R)\leq \nu\cdot \ell(R/\mc)$, provided that they fulfill an extra condition.
	
	\begin{cor}\label{AGCR}
		Let $(R,\mm)$ be a one-dimensional, analytically unramified, Cohen-Macaulay local ring with infinite residue field. If $R$ has a canonical reduction $x\omega$ (with $(x)$ minimal reduction of $\mm$) and if there exists $x_i$ belonging to a minimal generating set $\{x,x_2,\dots,x_{\nu}\}$ of $\mm$, such that $x_i\in \mm:\omega$, then $e(\mc) \le \nu \cdot \ell(R/\mc)$.
	\end{cor}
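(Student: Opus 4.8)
The plan is to repeat almost verbatim the proof of Theorem \ref{AGWilf}, observing that the almost Gorenstein hypothesis $\omega\subseteq\mm:\mm$ enters that argument in exactly one place, and that the two hypotheses of the present statement are precisely what is needed to replace it. Throughout I fix a canonical ideal $\omega$ with $R\subseteq\omega\subseteq\overline R$, the minimal reduction $(x)$ furnishing the canonical reduction $x\omega$, and I relabel the generator $x_i$ supplied by the hypothesis as $x_2$ (so that $x_2\neq x$).

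First I would reproduce the purely length-theoretic part of the argument, which does not involve almost Gorensteinness at all. As in Theorem \ref{AGWilf}, using $\mm=x\overline R\cap R$ (Remark \ref{m}) and the identity $x\mc=\mm\mc$ from the proof of Theorem \ref{dimd}, one gets $e(\mc)=\ell(x\overline R/\mm)+\ell(\mm/x\mc)$, while the surjection $\varphi:(R/\mc)^{\nu}\to\mm/x\mc$ gives $\nu\cdot\ell(R/\mc)=\ell(\mm/x\mc)+\ell(\ker\varphi)$. Hence the desired inequality $e(\mc)\le\nu\cdot\ell(R/\mc)$ reduces to $\ell(x\overline R/\mm)\le\ell(\ker\varphi)$. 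The canonical duality computation $\ell(x\overline R/\mm)=\ell((\omega:\mm)/(\omega:x\overline R))=\ell(\omega/(x^{-1}\mc\cap\omega))$ uses only $\omega:\overline R=\mc$ and $\mm=x\overline R\cap R$, so it carries over unchanged; the goal is therefore again to embed $\omega/(x^{-1}\mc\cap\omega)$ into $\ker\varphi$.

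The heart of the matter is the map $\psi(\overline\alpha)=(-\overline{\alpha x_2},\,\overline{\alpha x},\,\overline 0,\dots,\overline 0)$. In Theorem \ref{AGWilf} the inclusion $\omega\subseteq\mm:\mm$ was invoked only to ensure that $\alpha x$ and $\alpha x_2$ lie in $R$ (indeed in $\mm$) for every $\alpha\in\omega$, so that $\psi$ takes values in $(R/\mc)^{\nu}$. Here I would supply the same membership from the weaker hypotheses: the canonical reduction $x\omega\subseteq\mm$ (equivalently $x\in\mm:\omega$) gives $\alpha x\in\mm\subseteq R$, and the extra condition $x_2=x_i\in\mm:\omega$ gives $\alpha x_2\in\mm\subseteq R$. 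Once $\psi$ is seen to land in $(R/\mc)^{\nu}$, the remaining verifications are identical to those in Theorem \ref{AGWilf}: well-definedness modulo $x^{-1}\mc\cap\omega$ uses only that $x_2x^{-1}\in\overline R$ (since $x_2\in\mm=x\overline R\cap R$), injectivity uses only that $\alpha\notin x^{-1}\mc$ forces $\overline{\alpha x}\neq\overline 0$ in $R/\mc$, and the image lies in $\ker\varphi$ because $\varphi(\psi(\overline\alpha))=\overline{-\alpha x_2 x+\alpha x x_2}=\overline 0$. This produces the desired embedding and hence the inequality.

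The only genuinely new point—and thus the main thing to get right—is the bookkeeping around the hypothesis: one must note that $x\in\mm:\omega$ holds automatically under canonical reduction, so the content of the extra assumption is the existence of a \emph{second} minimal generator $x_i\neq x$ lying in $\mm:\omega$, which is exactly what is required to make the nonzero second component of $\psi$ land in $R/\mc$. Implicitly this also forces $\nu\ge 2$, which is harmless since $\mc\subsetneq R$ excludes the DVR case. I expect no further obstacle, since every length identity and the duality step are insensitive to the almost Gorenstein condition and depend only on the standing hypotheses of the paper together with the choice $R\subseteq\omega\subseteq\overline R$.
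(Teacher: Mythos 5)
Your proposal is correct and follows exactly the route the paper intends: the paper gives no separate proof of Corollary \ref{AGCR}, stating only that one repeats the proof of Theorem \ref{AGWilf}, and your analysis correctly identifies that the almost Gorenstein hypothesis $\omega\subseteq\mm:\mm$ enters that proof solely to guarantee $\alpha x,\alpha x_2\in\mm$ for $\alpha\in\omega$, which is exactly what the canonical reduction condition $x\in\mm:\omega$ together with the extra assumption $x_i\in\mm:\omega$ supplies. All the length identities, the duality computation $\ell(x\overline R/\mm)=\ell(\omega/(x^{-1}\mc\cap\omega))$, and the verification that $\psi$ is a well-defined injection into $\ker\varphi$ carry over unchanged, as you say.
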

	
	\begin{example}
		\begin{enumerate}
			\item Let $R=\Bbbk[[t^7,t^9,t^{11},t^{13}]]$ be the numerical semigroup ring associated to the numerical semigroup $S=\langle 7,9,11,13 \rangle$. The semigroup $S$ is not almost symmetric, hence $R$ is not almost Gorenstein; however, $S$ is positioned, and thus $R$ is a ring with canonical reduction. A minimal generating set of $\mathfrak{m}$ is $\{t^7,t^9,t^{11},t^{13}\}$, where $(t^7)$ is a minimal reduction of $\mathfrak{m}$ and $t^9 \in \mathfrak{m} : \omega$. Therefore, the hypotheses of Corollary \ref{AGCR} are satisfied. In fact, from the correspondence between numerical semigroup rings and their value semigroups we obtain $\ell(R/\mc)=8$, $v=4$, $e(\mc)=20$, and thus $e(\mc) \le \nu \cdot \ell(R/\mc)$.
			\item Let $R=\Bbbk[[t^{17},t^{27},t^{29}]]$ be the numerical semigroup ring associated to $S=\langle 17,27,29 \rangle$. $R$ is a ring with canonical reduction, not almost Gorenstein. A minimal generating set of $\mathfrak{m}$ is $\{t^{17},t^{27},t^{29}\}$, where $t^{17}$ is a minimal reduction of $\mathfrak{m}.$ However, $t^{27},t^{29} \not \in \mathfrak{m} : \omega$, thus the hypotheses of Corollary \ref{AGCR} are not satisfied. Nonetheless, since $S$ is a positioned numerical semigroup, Wilf's conjecture holds for $S$ (and thus for $R$), and in fact by computing the relevant invariants from the value semigroup, i.e.  $\ell(R/\mathfrak{c})=74$, $v=3$, $e(\mathfrak{c})=158$, we see that $e(\mc) \le \nu \cdot \ell(R/\mc)$.
			\item Let $R=\Bbbk[[t^7,t^9,t^{11},t^{15}]]$ be the numerical semigroup ring associated to the numerical semigroup $S=\langle 7,9,11,15 \rangle$. This numerical semigroup is not positioned, hence $R$ is not a ring with canonical reduction - and thus it does not satisfy the hypotheses of Corollary \ref{AGCR}. However, computing the invariants $e(\mc),\ell(R/\mc)$ from value semigroup we notice that these invariants are the same as in the first example, that is, $\ell(R/\mc)=8$, $v=4$, $e(\mc)=20$. Thus, even if $R$ is not a ring with canonical reduction, it still verifies the inequality $e(\mc) \le \nu \cdot \ell(R/\mc)$.
		\end{enumerate}
		
	\end{example}

		\section*{Acknowledgements}
		We would like to thank Marilina Rossi for helpful suggestions and for directing us to Lech's inequality, which played a crucial role in proving one of the main results.
		
		We also thank Carmelo Antonio Finocchiaro for his help in building the framework for the example in higher dimension.
		
		Finally, we thank the anonymous referee of a first version of this paper for his remarks and suggestions, which led to a significant improvement of the proof of Theorem \ref{dimd}.
		
		Both authors were partially funded by the project “Proprietà locali e globali di anelli e
		di varietà algebriche”-PIACERI 2020-22, Università degli Studi di Catania.
		The first author was also supported by
		the PRIN 2020 "Squarefree Gröbner degenerations, special
		varieties and related topics". The second author was partially supported by the Austrian Science Fund FWF, grant DOI 10.55776/PAT9756623.

	\end{document}